\def\bb#1\eb{\textcolor{blue}
{#1}} %
\newcommand{\R}{\mathds R}
\title[Chern connection as a family of affine connections]{Chern connection of a  pseudo-Finsler metric as a family of affine connections}
\author[M. A. Javaloyes]{Miguel Angel Javaloyes}
\address{Departamento de Matem\'aticas, \hfill\break\indent
Universidad de Murcia, \hfill\break\indent
Campus de Espinardo,\hfill\break\indent
30100 Espinardo, Murcia, Spain}
\email{majava@um.es}
\dedicatory{This article is dedicated to Professor Lajos Tamassy on the occasion of his 90th birthday}
\date{03.04.2011}
\thanks{This work was partially supported by MINECO (Ministerio de Econom\'{\i}a y Competitividad) and FEDER (Fondo Europeo de Desarrollo Regional) project MTM2012-34037 and Fundaci\'{o}n S\'{e}neca project 04540/GERM/06, Spain. This research is a
result of the activity developed within the framework of the Programme in Support of Excellence Groups of the Regi\'{o}n de Murcia, Spain, by Fundaci\'{o}n S\'{e}neca, Regional Agency for Science and Technology (Regional Plan for
Science and Technology 2007-2010).}
\thanks{2000 {\em Mathematics Subject Classification:} Primary  53C50, 53C60\\
\textbf{Key words:} Finsler metrics, Chern connection, Flag curvature.}
\begin{document}
\newtheorem{thm}{Theorem}[section]
\newtheorem{prop}[thm]{Proposition}
\newtheorem{lemma}[thm]{Lemma}
\newtheorem{cor}[thm]{Corollary}
\theoremstyle{definition}
\newtheorem{defi}[thm]{Definition}
\newtheorem{notation}[thm]{Notation}
\newtheorem{exe}[thm]{Example}
\newtheorem{conj}[thm]{Conjecture}
\newtheorem{prob}[thm]{Problem}
\newtheorem{rem}[thm]{Remark}

\begin{abstract}
We consider the Chern connection of a (conic) pseudo-Finsler manifold $(M,L)$ as a linear connection
$\nabla^V$ on any open subset $\Omega\subset M$ associated to any vector field $V$ on $\Omega$ which is non-zero everywhere. This connection is torsion-free and almost metric compatible with respect to the fundamental tensor $g$. Then we show some properties of the curvature tensor $R^V$ associated to $\nabla^V$ and in particular we prove that the Jacobi operator of $R^V$ along a geodesic coincides with the one given by the Chern curvature. 
\end{abstract}

\maketitle
\section{Introduction}
The Chern connection of a Finsler metric $F$ on a manifold $M$ was originally conceived by S.-S. Chern \cite{Chern43} as a connection in a fiber bundle over $TM\setminus\bf 0$ and introduced again independently by H. Rund in \cite{Rund59} (see also \cite{An95}). Then it was completely forgotten until the work of D. Bao and S.-S. Chern \cite{BaCh93}, where the authors show the extraordinary usefulness of the Chern connection in treating global problems of Finsler geometry. In particular, the connection provides an easy way to compute the flag curvature of a Finsler metric, which is an important invariant  associated to the deviation of geodesics. But when considered as a connection in a fiber bundle over $TM \setminus\bf 0$, it does not allow one to use the coordinate-free global methods of Modern Differential Geometry employed in the study of Riemannian Geometry. This can be overcome by using the osculating Riemannian metric associated to a Finsler metric introduced by A. Nazim in his Ph. D. thesis  \cite{nazim36} and studied sistematically  by O. Varga \cite{Varga41}. More precisely, for any $p\in M$ and any 
non-zero vector $v$ in $T_pM$, the fundamental tensor provides a scalar product in $T_pM$. In particular this idea was developed by 
H.-H.~Matthias in his Ph. D. Thesis \cite[Def. 2.5]{Mat80} to define an affine connection $\nabla^V$ on an open subset $\Omega\subset M$ for every vector field $V$ on $\Omega$ which is non-zero everywhere. The connection $\nabla^V$ is torsion-free and almost $g$-compatible, meaning that the derivative of the osculating Riemannian metric $g_V$ is not zero, but a certain expression in terms of the Cartan tensor (see \eqref{cartantensor} and Definition~\ref{cartanconnection}). The approach of H.-H. Matthias was collected in \cite[page 100]{Sh01}, where the author shows a relation of the Jacobi operator of the metric $g_V$ in case that $V$ is a geodesic field \cite[Proposition 8.4.3 and Lemma 8.1.1]{Sh01} and recovered again by other authors as H-B. Rademacher \cite{Rad04b,Rad04a} and Z. Kovacs and A. Toth in \cite{ToKo08} and also used by J. C. \'Alvarez Paiva and C. E. Dur\'an in \cite[Theorem 6.1]{AlDur01}. 

None of the cited works makes a detailed study of the properties of the curvature tensor $R^V$ of $\nabla^V$ and its relation with the flag curvature when $V$ is not a geodesic field. Our main goal is to write down the symmetries and basic properties of $R^V$ in order to establish the relation of $R^V$ with the flag curvature in the general case when $V$ is not a geodesic field. This result can be used for example to obtain the first and the second variation of the energy functional with coordinate-free global methods (see \cite{JavSoares13}).

The work is structured  as follows.  In Section \ref{quadratic} we introduce the notion of  pseudo-Finsler metric, which generalizes the former notions of Finsler metric in the sense that the function is not necessarily positive and it is positive homogeneous of degree two, rather than one, with non-degenerate fundamental tensor.  Then we introduce the Cartan tensor associated to $(M,L)$ and an affine connection $\nabla^V$  associated to a vector field $V$ in an open subset $\Omega\subset M$ which takes values in $A\cap T\Omega$ . This connection is characterized as the unique one which is torsion-free and almost metric compatible
(see Definition~\ref{cartanconnection}). As it is shown in Proposition ~\ref{curveconnection}, the connection $\nabla^V$ can be identified in a certain sense with the Chern connection and it defines a covariant derivative $D_\gamma^V$ along any curve $\gamma$ with a reference vector $V$ along the curve which is non-zero everywhere. 

 In Section \ref{curvature}
we firstly study in Proposition~\ref{propcurvature} the symmetric properties of the curvature tensor $R^V$ of $\nabla^V$. In Subsection \ref{flag} we establish the link between the tensor $R^V$ and the flag curvature of $(M,L)$. Unlike $\nabla^V$, the curvature tensor $R^V$ depends not only on the value of $V$ in $p\in M$, but in the whole vector field in a neighborhood of $p$. Nevertheless we show that the Jacobi operator can be defined along a curve (see Proposition \ref{curvaturacurva}) and it coincides with the Jacobi operator obtained from the curvature of the Chern connection as a connection on the fiber bundle $\pi^*_A(TM)$ over the conic subset $A$ when $\gamma$ is a geodesic (see Theorem~\ref{curvaturaway}).
This allows us to compute the flag curvature in terms of $R^V$ (see Corollary \ref{flagcur}).

\section{Pseudo-Finsler metrics}\label{quadratic}
Let $M$ be a smooth manifold, $TM$ its tangent bundle and $\pi:TM\rightarrow M$ the natural projection. We will say that an open subset $A\subset TM$ is {\it conic} if for every $v\in A$ and $\lambda>0$, we have $\lambda v\in A$. We say that a function $L:A\subset TM\rightarrow \R$ is a {\it (two-homogeneous, conic) pseudo-Finsler metric} if it is positive homogeneous of degree 2, that is, $L(\lambda v)=\lambda^2 L(v)$ for every $v\in A$ and $\lambda>0$, and the {\it fundamental tensor} of $L$ defined as 
\[g_v(u,w):=\frac 12 \frac{\partial^2}{\partial t\partial s} L(v+tu+sw)|_{t=s=0},\]
for any $v\in A$ and $u,w\in T_{\pi(v)}M$, is non-degenerate (see \cite{JS13} for explicit computations of the fundamental tensor in some important cases). 
In the following, we will assume that the pseudo-Finsler metric is two-homogeneous and conic, namely, not necessarily defined in the whole tangent bundle. Observe that in some references, a pseudo-Finsler metric is defined as a one-homogeneous function \cite{JS13} with possibly degenerate fundamental tensor. The square of such a function fits in our definition whenever the fundamental tensor is non-degenerate.

Then we define the {\it Cartan tensor} of $L$ as the trilinear form
\begin{equation}\label{cartantensor}
C_v(w_1,w_2,w_3)=\frac 14\left. \frac{\partial^3}{\partial s_3\partial s_2\partial s_1}L\left(v+\sum_{i=1}^3 s_iw_i\right)\right|_{s_1=s_2=s_3=0},
\end{equation}
for any $v\in A$ and $w_1,w_2,w_3\in T_{\pi(v)}M$. It is easy to see that $C_v$ is homogeneous of degree $-1$ in $v$ and 
\begin{equation}\label{porvenir}
C_v(v,w_1,w_2)=C_v(w_1,v,w_2)=C_v(w_1,w_2,v)=0
\end{equation}
for any $v\in A$ and $w_1,w_2\in T_{\pi(v)}M$ (see for example \cite[Proposition 2.6 and Remark 2.9]{JavSoares13}).

\subsection{Chern connection} Given a pseudo-Finsler manifold $(M,L)$ with conic domain $A\subset TM\setminus {\bf 0}$, we will say that a vector field $V$ on an open subset $\Omega\subset M$ is {\it $L$-admissible} if $V(x)\in A\cap T_xM$ for every $x\in \Omega$.

\begin{defi}\label{cartanconnection}
Let $(M,L)$ be a pseudo-Finsler manifold and $V$ an $L$-admissible vector field on an open subset $\Omega\subset M$. Consider an affine connection $\nabla^V$ on  $\Omega$ and denote by ${\mathfrak X}(\Omega)$ the space of vector fields on $\Omega$. We say that
\begin{enumerate}
 \item $\nabla^V$ is {\it torsion-free} if
$\nabla^V_XY-\nabla^V_YX=[X,Y]$
for every $X, Y\in{\mathfrak X}(\Omega)$,
\item $\nabla^V$ is {\it almost $g$-compatible} if
\[X( g_V(Y,Z))=g_V(\nabla^V_XY,Z)+g_V(Y,\nabla^V_XZ)+2 C_V(\nabla^V_XV,Y,Z),\]
where $X,Y,Z\in{\mathfrak X}(\Omega)$ and 
$g_V$ and $C_V$ are, respectively, the fundamental tensor and the Cartan tensor of $L$  evaluated on the vector field $V$.
\end{enumerate}
\end{defi}
\begin{rem}
Observe that the condition of almost $g$-compatibility for the pseudo-Finsler metric $L$  given above is equivalent to the equation
\[\nabla^V_X (g_V)(Y,Z)=2 C_V(\nabla^V_XV,Y,Z),\]
namely, the derivative of $g_V$ is expressed in terms of Cartan tensor.
\end{rem}
\begin{prop}\label{existconn}
A pseudo-Finsler manifold $(M,L)$ and an $L$-admissible vector field $V$ on an open subset $\Omega\subset M$ 
admit a unique torsion-free and almost $g$-compatible affine  connection $\nabla^V$.
\end{prop}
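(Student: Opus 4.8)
The plan is to adapt the classical Koszul argument for the Levi--Civita connection, the only novelty being the extra Cartan term appearing in the almost $g$-compatibility condition. First I would write the almost $g$-compatibility equation of Definition~\ref{cartanconnection} for the three cyclic permutations of $(X,Y,Z)$, add the first two and subtract the third, and use the torsion-free condition $\nabla^V_XY-\nabla^V_YX=[X,Y]$ together with the symmetry of $g_V$ to cancel the mixed covariant-derivative terms. Writing
\[
\Phi(X,Y,Z):=Xg_V(Y,Z)+Yg_V(Z,X)-Zg_V(X,Y)+g_V([X,Y],Z)-g_V([Y,Z],X)+g_V([Z,X],Y)
\]
for the usual (purely metric) Koszul expression, this produces the identity
\begin{equation}\label{koszulV}
2g_V(\nabla^V_XY,Z)=\Phi(X,Y,Z)-2C_V(\nabla^V_XV,Y,Z)-2C_V(\nabla^V_YV,Z,X)+2C_V(\nabla^V_ZV,X,Y),
\end{equation}
which every torsion-free almost $g$-compatible connection must satisfy; conversely I will show that any $\nabla^V$ defined through \eqref{koszulV} enjoys both properties.

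The main obstacle, absent in the Riemannian case, is that \eqref{koszulV} is \emph{implicit}: the unknown $\nabla^V$ reappears on the right-hand side through the terms $\nabla^V_\bullet V$. The crucial device that breaks this circularity is the symmetry \eqref{porvenir}, which forces $C_V$ to vanish whenever $V$ occupies any of its slots. Setting $Y=V$ in \eqref{koszulV} kills the first and third Cartan terms (whose second, respectively third, slot then equals $V$), leaving $2g_V(\nabla^V_XV,Z)=\Phi(X,V,Z)-2C_V(\nabla^V_VV,Z,X)$; setting moreover $X=V$ annihilates the last Cartan term, so that $2g_V(\nabla^V_VV,Z)=\Phi(V,V,Z)$ for all $Z$. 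Since $g_V$ is non-degenerate (the fundamental tensor is, and $V$ is $L$-admissible), this determines $\nabla^V_VV$ uniquely; reinserting it determines $\nabla^V_XV$ for every $X$, and then the whole right-hand side of \eqref{koszulV} is known, fixing $\nabla^V_XY$ for all $X,Y$. This cascade gives uniqueness.

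For existence I would run the cascade in reverse as a \emph{definition}: use non-degeneracy of $g_V$ to define $\nabla^V_VV$, then $\nabla^V_XV$, and finally declare $\nabla^V_XY$ by \eqref{koszulV}; one checks that substituting $Y=V$ in the resulting formula reproduces the previously defined $\nabla^V_XV$ (again by \eqref{porvenir}), so the definition is consistent. It then remains to verify that $\nabla^V$ is a genuine affine connection satisfying (1)--(2) of Definition~\ref{cartanconnection}. Tensoriality in the direction $X$ and the Leibniz rule in $Y$ follow exactly as for the Levi--Civita connection on the $\Phi$-part, while the Cartan terms are harmless because $C_V$ is $\R$-multilinear in each slot and $X\mapsto\nabla^V_XV$ is itself tensorial, so no spurious derivative terms arise. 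The key structural fact for the remaining two checks is that $C_V$ is \emph{totally symmetric} in its three arguments, being a symmetric third derivative in \eqref{cartantensor}. This symmetry makes the Cartan contribution to \eqref{koszulV} invariant under the interchange of $X$ and $Y$, hence it contributes nothing to the torsion, which then vanishes just as in the Riemannian case. Finally, almost $g$-compatibility is recovered by reversing the Koszul computation: adding \eqref{koszulV} to its analogue with $Y$ and $Z$ swapped, and using the now-established torsion-freeness and the total symmetry of $C_V$, one obtains $Xg_V(Y,Z)=g_V(\nabla^V_XY,Z)+g_V(Y,\nabla^V_XZ)+2C_V(\nabla^V_XV,Y,Z)$, as required.
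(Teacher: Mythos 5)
Your proposal is correct and follows essentially the same route as the paper: the Koszul formula with Cartan correction terms, with the circularity broken by using \eqref{porvenir} to first determine $\nabla^V_VV$, then $\nabla^V_XV$, and finally $\nabla^V_XY$. You work out in more detail the existence half (consistency of the cascade, tensoriality via $f$-linearity of $\nabla^V_\bullet V$, and the verification of torsion-freeness and almost $g$-compatibility from total symmetry of $C_V$), which the paper's terse proof leaves implicit, but the underlying argument is the same.
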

\begin{proof}
Observe that $\nabla^V$ is determined by a ``Koszul formula'' as
\begin{multline*}
 2 g_V(\nabla^V_XY,Z)= X (g_V(Y,Z))-Z (g_V(X,Y))+Y (g_V(Z,X))\\
+g_V([X,Y],Z)+g_V([Z,X],Y)-g_V([Y,Z],X)\\
+2( -C_V(\nabla^V_XV,Y,Z)-C_V(\nabla^V_YV,Z,X)+C_V(\nabla^V_ZV,X,Y)).
\end{multline*}
Indeed, when $X=Y=V$, the terms of the Cartan tensor vanish because of \eqref{porvenir}
determining $\nabla^V_VV$ and then $\nabla^V_XV$ can be determined using  $\nabla^V_VV$.
Moreover, it follows from Koszul formula that $\nabla^V_XV$ is $f$-linear in $X$, that is, $\nabla^V_{fX}V=f \nabla^V_XV$ for any real function $f$ on $U$. Then it is clear that Koszul formula determines $\nabla^V_XY$ for any vector fields $X,Y$ on $U$ and it is an affine linear connection.
\end{proof}
Analogous computations to those of the last proof can be found in \cite[Theorem 3.10]{Rad04b}.
\begin{rem}\label{nablahomo}
Observe that $\nabla^V$ is homogeneous of degree zero in $V$ in the sense that if $\lambda>0$, then $\nabla^{\lambda V}=\nabla^V$, since 
$\nabla^{\lambda V}$ solves the same equations as $\nabla^V$ (see Definition \ref{cartanconnection}).
\end{rem}
Let us denote by $n$ the dimension of $M$. Now fix a coordinate system on an open subset $\Omega$ of $M$, that is, a map $\varphi:\Omega\rightarrow \varphi(\Omega)\subset \R^n$, given by
$\varphi(p)=(x^1(p),x^2(p),\ldots,x^n(p))$ for every $p\in\Omega$ and denote as $\frac{\partial}{\partial x^1},\ldots,\frac{\partial}{\partial x^n}$, the vector fields associated to the system, that is, the partial derivatives of $\varphi^ {-1}$ (composed with $\varphi$ in order to have vector fields in $\Omega$), which we will
call the {\it coordinate basis associated to $\varphi$}. We define \emph{the formal Christoffel symbols} associated to $\varphi$ and the vector field $V$, $\Gamma_{\,\,ij}^k(V) $, by means of the equation
\[
\nabla^V_{\frac{\partial}{\partial x^i}}\left(\frac{\partial}{\partial x^j}\right)=\sum_{k=1}^ n\Gamma_{\,\,ij}^k(V) \frac{\partial}{\partial x^k},
\]
for $i,j=1,\ldots,n$.
\begin{rem}
Let us denote by $g_{ij}(v)=g_v(\frac{\partial}{\partial x^i},\frac{\partial}{\partial x^j})$ the functions defined for any $v\in T\Omega\cap A$. Moreover, $g^{ij}$ will be the coefficients of the inverse matrix of $\{g_{ij}\}$ with $i,j=1,\ldots,n$. From now on we will use the {\it Einstein summation convention} consisting in omitting the sums from 1 to $n$  when an index appears up and down, and we will raise and lower indices using $g_{ij}$ and $g^{ij}$, for example
\[\Gamma_{kij}(V)=\sum_{m=1}^n g_{km}(V) \Gamma_{\,\,\,ij}^m(V)=g_{km}(V) \Gamma_{\,\,\,ij}^m(V),\]
for any $L$-admissible vector field $V$ on $\Omega$. Moreover,  $j$ in $\frac{\partial}{\partial x^j}$ will be considered a down index and then 
\[\sum_{k=1}^ n\Gamma_{\,\,ij}^k(V) \frac{\partial}{\partial x^k}=\Gamma_{\,\,ij}^k(V) \frac{\partial}{\partial x^k}.\]
\end{rem}
In principle, $\Gamma_{\,\,ij}^k(V)$ depends on the vector field $V$, but let us see that this is not the case and in fact they are homogeneous real  functions of degree zero on $A\cap T\Omega$ (see Remark \ref{nablahomo}).  

Given a curve $\gamma:[a,b]\rightarrow M$, we will define the vector bundle $\gamma^*(TM)$ as the vector bundle over $[a,b]$ induced by $\pi:TM\rightarrow M$ through $\gamma$. The smooth sections of $\gamma^*(TM)$ are called vector fields along $\gamma$ and we will denote by ${\mathfrak X}(\gamma)$ the subset of such smooth sections.
We will say that $V\in {\mathfrak X}(\gamma)$ is \emph{ $L$-admissible} if $V(t)\in A$ for every $t\in [a,b]$.
\begin{prop}\label{curveconnection}
Let $(M,L)$ be a pseudo-Finsler manifold  and $V$ an $L$-ad\-mi\-ssi\-ble vector field  in an open subset $\Omega\subset M$ endowed with a system of coordinates $\varphi$. Then the Christoffel symbols of $\nabla^V$ depend only on $v=V(x)$, with $x\in \Omega$, and not on the extension of $v$. Moreover,  they coincide with the Christoffel symbols of the Chern connection (see \cite[Eq. (2.4.9)]{BaChSh00}). Given a smooth curve $\gamma:[a,b]\rightarrow M$, $X\in{\mathfrak X}(\gamma)$ and $W$ an $L$-admissible vector field along $\gamma$, we can define the covariant derivative of $X$ along $\gamma$  having $W$ as reference vector  as 
\begin{equation}\label{connection}
D^W_\gamma X= \frac{{\rm d}X^i}{{\rm d}t}\frac{\partial}{\partial x^i}+  X^i(t)\dot\gamma^j(t)\Gamma_{\,\,ij}^k(W(t))\frac{\partial}{\partial x^k},
\end{equation}
where $(X^1,\ldots,X^n)$ and $(\dot\gamma^1,\ldots,
\dot\gamma^n)$ are respectively the coordinates of $X$ and $\dot\gamma$ in the coordinate basis of $\varphi$. Moreover, it is almost $g$-compatible, namely,
if $X,Y\in {\mathfrak X}(\gamma)$, then
\begin{equation}\label{almostmetric}
\frac{d}{dt}g_{W}(X,Y)=g_{W}(D_\gamma^WX,Y)+g_{W}(X,D_\gamma^WY)+2C_{W}(D_\gamma^WW,X,Y).
\end{equation}
\end{prop}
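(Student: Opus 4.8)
The plan is to compute the Christoffel symbols of $\nabla^V$ directly from the Koszul formula in the proof of Proposition~\ref{existconn}, specialised to the coordinate fields. Taking $X=\partial_i$, $Y=\partial_j$, $Z=\partial_k$ (with $\partial_i:=\partial/\partial x^i$) kills all the Lie brackets, and the left-hand side becomes $2g_{km}(V)\Gamma_{\,\,ij}^m(V)=2\Gamma_{kij}(V)$. First I would differentiate the metric coefficients by the chain rule, writing $\partial_i\bigl(g_{jk}(V)\bigr)=(\partial_i g_{jk})(V)+\frac{\partial g_{jk}}{\partial v^l}(V)\,\partial_i V^l$, where $(\partial_i g_{jk})(V)$ denotes the base-point derivative evaluated at $v=V(x)$. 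Comparing \eqref{cartantensor} with the definition of $g_v$ yields the identity $\frac{\partial g_{jk}}{\partial v^l}=2\,C_{jkl}$, where $C_{jkl}:=C_v(\partial_j,\partial_k,\partial_l)$; thus every derivative of $g$ contributes a term carrying $\partial_i V^l$ and the Cartan tensor.

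Next I would expand the Cartan terms on the right-hand side using $(\nabla^V_{\partial_i}V)^m=\partial_i V^m+V^l\Gamma_{\,\,il}^m(V)$, so that each $C_V(\nabla^V_{\partial_i}V,\cdot,\cdot)$ splits into a piece carrying $\partial_i V^m$ and a piece carrying $V^l\Gamma_{\,\,il}^m(V)$. The key computation---and essentially the only place where care is needed---is that the three $\partial V$ contributions from the derivatives of $g$ cancel term by term against the three $\partial V$ contributions from the Cartan terms, using only the total symmetry of $C_v$. What survives is
\begin{equation*}
2\,\Gamma_{kij}(V)=\bigl(\partial_i g_{jk}-\partial_k g_{ij}+\partial_j g_{ki}\bigr)(V)-2N_i^m C_{mjk}-2N_j^m C_{mki}+2N_k^m C_{mij},
\end{equation*}
where $N_i^m:=V^l\Gamma_{\,\,il}^m(V)$ and every quantity is evaluated at $v=V(x)$ and involves no derivative of $V$. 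Since the coefficients of this implicit system depend only on $v$, and since Proposition~\ref{existconn} guarantees a unique solution---obtained, as in its proof, by first solving for $\nabla^V_VV$ (where the Cartan terms vanish by \eqref{porvenir}) and then for the rest---the symbols $\Gamma_{\,\,ij}^k(V)(x)$ depend on $v=V(x)$ alone, which is the first claim. Introducing the horizontal derivative $\frac{\delta g_{jk}}{\delta x^i}:=\partial_i g_{jk}-2N_i^m C_{mjk}$ rewrites the identity as $2\Gamma_{kij}=\frac{\delta g_{jk}}{\delta x^i}-\frac{\delta g_{ij}}{\delta x^k}+\frac{\delta g_{ki}}{\delta x^j}$, which is exactly the defining relation \cite[Eq. (2.4.9)]{BaChSh00} of the Chern connection; hence the two sets of Christoffel symbols coincide.

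With $\Gamma_{\,\,ij}^k$ known to be functions on $A\cap T\Omega$, the expression \eqref{connection} is meaningful because $W(t)\in A$ for all $t$. To check that $D^W_\gamma X$ is coordinate-free I would fix $t_0$, extend $W$ near $t_0$ to an $L$-admissible field $V$ on a neighbourhood of $\gamma(t_0)$ and extend $X$ to a field $\tilde X$ agreeing with $X$ along $\gamma$; then the right-hand side of \eqref{connection} reproduces $(\nabla^V_{\dot\gamma}\tilde X)(t_0)$, which is intrinsic and depends only on $W(t_0),\dot\gamma(t_0),X(t_0)$ and $\dot X^i(t_0)$. Where $\gamma$ fails to be an immersion one instead verifies directly that \eqref{connection} obeys the affine transformation law, using that for each fixed $v$ the $\Gamma_{\,\,ij}^k(v)$ transform like the symbols of an affine connection.

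Finally, for \eqref{almostmetric} I would differentiate $g_W(X,Y)=g_{ij}(W)X^iY^j$ along $\gamma$; the term $\frac{d}{dt}g_{ij}(W)=(\partial_k g_{ij})(W)\dot\gamma^k+2C_{ijl}(W)\dot W^l$ again splits into a base-point part and a Cartan part carrying $\dot W^l$. Substituting the coordinate identity for $\Gamma_{kij}(W)$ obtained above together with \eqref{connection}, the base-point and Christoffel contributions reorganise into $g_W(D^W_\gamma X,Y)+g_W(X,D^W_\gamma Y)$, while the $\dot W$-terms assemble into $2C_W(D^W_\gamma W,X,Y)$. More conceptually, \eqref{almostmetric} is just the restriction to $\gamma$ of the almost $g$-compatibility of $\nabla^V$ in Definition~\ref{cartanconnection}(2): applying that identity to the extensions $V,\Xi,\tilde X,\tilde Y$ with $\Xi(\gamma(t_0))=\dot\gamma(t_0)$ and evaluating at $\gamma(t_0)$ gives \eqref{almostmetric}. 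The hard part is the bookkeeping of the $\partial V$ cancellation in the second step, after which one must still recognise that the residual implicit system determines $\Gamma$ as a function of $v$ only; both hinge on the vanishing \eqref{porvenir} and on the identification $\partial_{v^l}g_{jk}=2C_{jkl}$.
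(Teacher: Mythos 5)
Your derivation follows the paper's own route almost step for step: the Koszul formula evaluated on coordinate fields, the chain-rule splitting $\partial_i\bigl(g_{jk}(V)\bigr)=(\partial_i g_{jk})(V)+2C_{jkl}\,\partial_iV^l$ (the paper's \eqref{abuse}), the term-by-term cancellation of the $\partial V$ contributions, and the resulting implicit system for the symbols, solved by the contraction trick of Proposition~\ref{existconn} (this is the paper's \eqref{firstexpr}--\eqref{gamma}). The first claim and the compatibility identity \eqref{almostmetric} are handled correctly; in fact your direct coordinate computation of \eqref{almostmetric} works uniformly in $t$, whereas the paper splits into the cases $\dot\gamma(t)\neq 0$ (extension argument) and $\dot\gamma(t)=0$. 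Note, though, that your ``more conceptual'' alternative via extensions $\tilde X,\tilde Y,\Xi$ suffers exactly the limitation the paper's case split is designed to avoid: at points where $\dot\gamma$ vanishes, vector fields along $\gamma$ need not extend to fields on $\Omega$, so that argument cannot replace the computation there.

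There is, however, one genuine gap, in the identification with the Chern connection. Your horizontal derivative $\frac{\delta g_{jk}}{\delta x^i}:=\partial_i g_{jk}-2N_{\,\,i}^m C_{mjk}$ is built from $N_{\,\,i}^m:=V^l\Gamma_{\,\,il}^m(V)$, i.e.\ from a contraction of the very symbols you are solving for, whereas in \cite[Eq. (2.4.9)]{BaChSh00} the operator $\delta/\delta x^i$ uses the nonlinear connection defined independently by \cite[Eq. (2.3.2a)]{BaChSh00} in terms of the formal Christoffel symbols $\gamma^i_{\,\,jk}$ and the Cartan tensor. So the sentence ``which is exactly the defining relation (2.4.9)'' conflates two a priori different objects, and ``hence the two sets of Christoffel symbols coincide'' does not yet follow. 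To close the gap you must either invoke the standard identity $N_{\,\,j}^i=\Gamma^i_{\,\,jk}y^k$ for the Chern connection (then the Chern symbols solve your implicit system, and equality follows from the uniqueness you established), or do what the paper does: contract your identity once and twice with $V$, use \eqref{porvenir} to kill the Cartan terms, and obtain the closed expression $N_{\,\,j}^s=V^i\gamma^s_{\,\,ji}-V^lV^i\gamma_{\,\,li}^p g^{ks}C_{pjk}$ (the paper's \eqref{vivj2} and \eqref{vi}), which is precisely \cite[Eq. (2.3.2a)]{BaChSh00}; substituting back yields the explicit formula \eqref{gamma} matching (2.4.9). This is a short, fillable step---you already have the contraction trick and \eqref{porvenir} in hand---but as written the coincidence with the Chern connection is asserted rather than proved.
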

\begin{proof}
Let us observe that the functions $g_{ij}$ are defined in  $A\cap T\Omega$ and we will consider the natural coordinate system in $T\Omega$ associated to $x^1,x^2,\ldots,x^n$, which will be denoted as $x^1,x^2,\ldots,x^n,y^1,y^2,\ldots,y^n$. Denote by $V^1,\ldots,V^n$
the coordinates of $V$ in $(\Omega,\varphi)$. Observe that 
\begin{equation}\label{abuse}
\frac{\partial (g_{ij}\circ \varphi^{-1})(x,V^1(x),\ldots,V^n(x))}{\partial x^k}=\frac{\partial g_{ij}}{\partial x^k}+\frac{\partial V^l}{\partial x^k}\frac{\partial g_{ij}}{\partial y^l}=\frac{\partial g_{ij}}{\partial x^k}+2\frac{\partial V^l}{\partial x^k}C_{lij},
\end{equation}
where $C_{lij}=C_V(\frac{\partial}{\partial x^l},\frac{\partial}{\partial x^i},\frac{\partial}{\partial x^j})$ and $x=(x^1,\ldots,x^n)\in \varphi(\Omega)$. With abuse of notation we have omitted the composition with $\varphi^{-1}$ and the point of evaluation  in the right-hand terms and in the rest of the proof.
Now from Koszul formula for $X=\frac{\partial}{\partial x^i}$, $Y=\frac{\partial}{\partial x^j}$ and $Z=\frac{\partial}{\partial x^k}$ and \eqref{abuse}, we obtain
\begin{equation}\label{firstexpr}
\Gamma_{kji}=\gamma_{kji}-V^l\Gamma_{\,\,il}^p C_{pkj}-V^l\Gamma_{\,\,jl}^p C_{pik}+V^l\Gamma_{\,\,kl}^p C_{pji},
\end{equation}
where 
\[\gamma_{ijk}=\frac 12\left(\frac{\partial g_{ij}}{\partial x^k}-\frac{\partial g_{jk}}{\partial x^i}+\frac{\partial g_{ki}}{\partial x^j}\right).\]
 Observe that 
by \ref{porvenir}, $ V^l C_{lij}=0$ and $C_{ijk}$ is symmetric in the three indexes. Then
$V^iV^j\Gamma_{kji}= V^iV^j\gamma_{kij}$
and raising indices we get
\begin{equation}\label{vivj2}
 V^iV^j\Gamma^k_{\,\, ji}= V^iV^j\gamma^k_{\,\, ij}.
\end{equation}
 From \eqref{firstexpr} and using \eqref{vivj2} we conclude that
\begin{equation}\label{vi}
 V^i\Gamma^s_{\,\, ji}= g^{ks}V^i\Gamma_{kji}= V^i\gamma^s_{\,\,ji}-V^lV^ i\gamma_{\,\,li}^p g^{ks}C_{pjk}:=N_{\,\,j}^s,
\end{equation}
where the quantities $N_{\,\,j}^s$ are the coefficients of the nonlinear connection associated to $L$ (see \cite[Eq. (2.3.2a)]{BaChSh00}).
Finally, using the last expression and \eqref{firstexpr},
\begin{equation}\label{gamma}
\Gamma^s_{\,\,ji}=\gamma^s_{\,\, ji}+g^{ks}\left(-N_{\,\,i}^p C_{pjk}-N_{\,\,j}^p C_{pki}+N_{\,\,k}^p C_{pij}\right).
\end{equation}
It is clear that Christoffel symbols depend just on the vector $V(x)$ and not in the vector field $V$, since they do not depend on the derivatives of $V$. This allows one to define a covariant derivative along a curve $\gamma$ by fixing a vector field $W$ along the curve. In order to check \eqref{almostmetric}, observe that if $\dot\gamma(t)\not=0$, then $D_\gamma^WX=\nabla^{\tilde{W}}_{\dot\gamma} \tilde{X}$ for any extensions $\tilde{X}$ and $\tilde{W}$ of $X$ and $W$ and \eqref{almostmetric} follows from the definition of $\nabla^{\tilde{W}}$. Assume now that $\dot\gamma(t)=0$. First observe that 
$D_\gamma^WZ(t)=\frac{dZ^i}{dt}(t)\left.\frac{\partial}{\partial x^i}\right|_{\gamma(t)}$ for any $Z\in{\mathfrak X}(\gamma)$ of coordinates $Z^1,\ldots,Z^n$. Then 
\begin{multline*}
\frac{d}{dt}(g_W(X,Y))=\frac{d}{dt}(X^iY^j g_{ij}(W))\\
=\frac{dX^i}{dt}Y^j g_{ij}(W)+X^i\frac{d Y^j }{dt}g_{ij}(W)+X^iY^j\frac{dg_{ij}(W)}{dt}\\=g_W(D_\gamma^WX,Y)+g_W(X,D_\gamma^WY)+X^iY^j\frac{dW^k}{dt}\frac{\partial g_{ij}}{\partial y^k}(W)\\
=g_W(D_\gamma^WX,Y)+g_V(X,D_\gamma^WY)+2C_V(D_\gamma^WW,X,Y),
\end{multline*}
as required. To check that the definition does not depend on the system of coordinates is left to the author (see also Remark \ref{sintetico})
\end{proof}
\begin{rem}\label{sintetico}
Observe that the covariant derivative along $\gamma$ with reference an $L$-admissible vector field $V\in{\mathfrak X}(\gamma)$ which has been  defined in Proposition \ref{curveconnection} can be also defined as the unique map  $D_\gamma^V:{\mathfrak X}(\gamma)\rightarrow {\mathfrak X}(\gamma)$, such that
\begin{enumerate}[(i)]
\item $D_\gamma^V(aZ_1+bZ_2)=a D_\gamma^VZ_1+bD_\gamma^VZ_2$, for $Z_1,Z_2\in{\mathfrak X}(\gamma)$ and $a,b\in \R$,
\item $D_\gamma^V(hZ)=\frac{dh}{dt}Z+hD_\gamma^VZ$, for $Z\in{\mathfrak X}(\gamma)$ and $h\in {\mathcal F}([a,b])$,
\item $D_\gamma^V X(\gamma)=\nabla^V_{\dot\gamma}X$ for $t\in[a,b]$ and $X\in{\mathfrak X}(\Omega)$,
\end{enumerate}
where ${\mathcal F}([a,b])$ is the subset of smooth real functions on $[a,b]$, in $\rm (iii)$ we consider any $L$-admissible extension of $V$ to an open subset $\Omega$ and 
$X(\gamma)$ is the vector field along $\gamma$ defined as $X(t)=X(\gamma(t))$ for every $t\in [a,b]$.
See also \cite[Proposition 3.18]{oneill}.
\end{rem}

\section{Curvature}\label{curvature}
 Along this section we will fix a pseudo-Finsler manifold $(M,L)$ and an $L$-admissible vector field $V$ defined in an open subset $\Omega\subset M$, being $\nabla^V$ the Chern connection of $(M,L)$ having $V$ as a reference vector field. We can define now the curvature associated to the affine connection $\nabla^V$ as a tensor $(1,3)$ in the open subset $\Omega\subset M$ defined by
\begin{equation*}
R^V(X,Y)Z=\nabla^V_X\nabla^V_YZ-\nabla^V_Y\nabla^V_XZ-
\nabla^V_{[X,Y]}Z
\end{equation*}
for every $X,Y,Z\in {\mathfrak X} (\Omega)$. It is straightforward to check that 
$R^V$ is a tensor. The curvature tensor satisfies some symmetries with respect to 
the metric $g_V$. We will need the covariant derivative of the Cartan tensor to 
express these symmetries. This covariant derivative $\nabla^VC_V$ is a $(0,4)$ tensor defined as
\begin{multline*}
\nabla^V_XC_V(Y,Z,W)=X(C_V(Y,Z,W))-C_V(\nabla^V_XY,Z,W)\\-C_V(Y,\nabla^V_XZ,W)-C_V(Y,W,\nabla^V_XW),
\end{multline*}
for every $X,Y,Z,W\in {\mathfrak X} (\Omega)$. It follows easily that $\nabla^V_XC_V$ 
is trilinear,  symmetric and 
\begin{equation}\label{formulilla}
\nabla^V_XC_V(V,Z,W)=-C_V(\nabla^V_XV,Z,W).
\end{equation}
\begin{prop}\label{propcurvature}
Let $X,Y,Z,W\in {\mathfrak X} (\Omega)$, then
\begin{enumerate}[(i)]
\item  $R^V(X,Y)=-R^V(Y,X)$,
\item $g_V(R^V(X,Y)Z,W)+g_V(R^V(X,Y)W,Z)=2B^V(X,Y,Z,W)$, where
\begin{multline*}
B^V(X,Y,Z,W)=\nabla^V_YC_V(\nabla_X^VV,Z,W)
-\nabla^V_XC_V(\nabla_Y^VV,Z,W)\\+
C_V(R^V(Y,X)V,Z,W),
\end{multline*}
\item $R^V(X,Y)Z+R^V(Y,Z)X+R^V(Z,X)Y=0$.
\end{enumerate}
Furthermore,
\begin{multline}\label{seisB}
g_V(R^V(X,Y)Z,W)-g_V(R^V(Z,W)X,Y)=\\
 B^V(Z,Y,X,W)+B^V(X,Z,Y,W)+B^V(W,X,Z,Y)\\
 +B^V(Y,W,Z,X)+B^V(W,Z,X,Y)+B^V(X,Y,Z,W).
\end{multline}
\end{prop}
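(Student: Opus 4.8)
The plan is to handle the four assertions in increasing order of difficulty. Assertions (i) and (iii) are purely formal and use nothing beyond the structure of an affine connection. For (i), the antisymmetry $R^V(X,Y)=-R^V(Y,X)$ follows immediately from the defining formula together with $[Y,X]=-[X,Y]$. For (iii), the first Bianchi identity holds for any torsion-free connection, so it follows from Proposition~\ref{existconn} by the classical argument: expand the cyclic sum, replace each $\nabla^V_AB-\nabla^V_BA$ by $[A,B]$, and invoke the Jacobi identity for the Lie bracket. Neither step uses the pseudo-Finsler data beyond torsion-freeness.

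The substantive step is (ii). I would apply the Ricci identity to the $(0,2)$-tensor $g_V$, which for any connection with the paper's sign convention reads
\[
g_V(R^V(X,Y)Z,W)+g_V(Z,R^V(X,Y)W)=-\big[(\nabla^V_X\nabla^V_Yg_V-\nabla^V_Y\nabla^V_Xg_V-\nabla^V_{[X,Y]}g_V)(Z,W)\big].
\]
Into the right-hand side I would substitute almost $g$-compatibility in the form of the Remark after Definition~\ref{cartanconnection}, namely $(\nabla^V_Ag_V)(Z,W)=2C_V(\nabla^V_AV,Z,W)$, which is tensorial in $A$ since $\nabla^V_AV$ is $f$-linear in $A$. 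Expanding the antisymmetrized second covariant derivative of $g_V$ through the definition of $\nabla^VC_V$, the terms regroup: the part carrying two derivatives of $V$ assembles into $C_V(R^V(X,Y)V,Z,W)$ by means of $\nabla^V_X\nabla^V_YV-\nabla^V_Y\nabla^V_XV-\nabla^V_{[X,Y]}V=R^V(X,Y)V$ (torsion-freeness absorbs the $\nabla^V_{\nabla^V_XY}V$-type terms), while the remainder becomes $\nabla^V_YC_V(\nabla^V_XV,Z,W)-\nabla^V_XC_V(\nabla^V_YV,Z,W)$. Comparing with the definition of $B^V$ and using $R^V(Y,X)=-R^V(X,Y)$ yields exactly $2B^V(X,Y,Z,W)$.

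For the pairing formula \eqref{seisB} I would imitate the classical derivation of the pair symmetry of the Riemann tensor from its elementary symmetries, carrying along the corrections. First I would record the two symmetries of $B^V$ that are needed: it is symmetric in its last two arguments (inherited from $C_V$ and $\nabla^VC_V$) and antisymmetric in its first two (the $R^V(Y,X)V$ term changes sign while the two $\nabla^VC_V$ terms interchange). Writing $\langle XYZW\rangle:=g_V(R^V(X,Y)Z,W)$, assertion (i) gives the exact antisymmetry $\langle XYZW\rangle=-\langle YXZW\rangle$, whereas (ii) gives the almost antisymmetry $\langle XYWZ\rangle=-\langle XYZW\rangle+2B^V(X,Y,Z,W)$. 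I would then write the Bianchi identity (iii) for the four cyclic rotations of $(X,Y,Z,W)$ and form their alternating combination with signs $+,+,-,-$, which vanishes. Reducing each of its twelve terms to $\langle XYZW\rangle$ or $\langle ZWXY\rangle$ via (i) and (ii) leaves $2\langle XYZW\rangle-2\langle ZWXY\rangle$ together with six $B^V$-corrections; rewriting these by the two symmetries of $B^V$ brings them into the stated symmetric six-term form.

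The main obstacle is the bookkeeping in steps (ii) and \eqref{seisB}. In (ii) one must correctly align the expansion of the antisymmetrized second covariant derivative of $g_V$ with the definition of $\nabla^VC_V$, in particular recognizing the emergence of the curvature term $C_V(R^V(X,Y)V,\cdot,\cdot)$. In \eqref{seisB} one must track all twelve correction terms and then exploit the antisymmetry and symmetry of $B^V$ to see that the seemingly asymmetric outcome coincides with the symmetric right-hand side. No individual step is deep, but the sign conventions and the orderings of arguments demand careful accounting.
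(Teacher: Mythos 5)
Your proposal is correct, and most of it follows the paper's own skeleton: parts (i) and (iii) are handled identically, and your derivation of the pairing formula \eqref{seisB} is literally the paper's argument --- your ``four cyclic rotations of $(X,Y,Z,W)$ with signs $+,+,-,-$'' becomes, after flipping orientations with (i), exactly the paper's sum of four Bianchi identities, followed by the same appeal to the antisymmetry of $B^V$ in its first two slots and its symmetry in the last two. The genuine difference is in (ii). The paper proves it by brute force: it reduces to vanishing brackets, repeatedly applies almost $g$-compatibility in the scalar form $X(g_V(Y,Z))=g_V(\nabla^V_XY,Z)+g_V(Y,\nabla^V_XZ)+2C_V(\nabla^V_XV,Y,Z)$, and regroups a long display until the three terms of $B^V$ emerge. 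You instead apply the Ricci identity for $(0,2)$-tensors to $g_V$ and substitute the tensorial form of almost compatibility, $\nabla^V_Ag_V=2C_V(\nabla^V_AV,\cdot,\cdot)$; expanding with the definition of $\nabla^VC_V$ then yields the two $\nabla^VC_V$ terms plus $C_V$ evaluated on $\nabla^V_X\nabla^V_YV-\nabla^V_Y\nabla^V_XV-\nabla^V_{[X,Y]}V=R^V(X,Y)V$. I checked the signs against the paper's curvature convention: indeed $\left(\nabla^V_X\nabla^V_Yg_V-\nabla^V_Y\nabla^V_Xg_V-\nabla^V_{[X,Y]}g_V\right)(Z,W)=-g_V(R^V(X,Y)Z,W)-g_V(Z,R^V(X,Y)W)$, so your route closes correctly and reproduces $2B^V(X,Y,Z,W)$. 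What your packaging buys is brevity and conceptual clarity --- it makes transparent why the term $C_V(R^V(Y,X)V,Z,W)$ must appear --- at the price of invoking (or first proving) the Ricci identity; the paper's expansion is longer but entirely self-contained. One small correction: your parenthetical that ``torsion-freeness absorbs the $\nabla^V_{\nabla^V_XY}V$-type terms'' is only relevant if you phrase the Ricci identity through second covariant derivatives $\nabla^2_{X,Y}$; in the bracket form you actually wrote, no such terms arise and torsion-freeness plays no role in (ii) at all.
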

\begin{proof}
 As the identities are tensorial,
 we can assume 
that  the brackets beween all the vector fields (excluding $V$) are zero.
The first identity follows immediately. For the second one, using the definition of $R^V$ and that $\nabla^V$ is almost metric $g$-compatible, we get
\begin{multline*}
g_V(R^V(X,Y)Z,W)+g_V(R^V(X,Y)W,Z)=\\
g_V(\nabla^V_X \nabla^V_YZ-\nabla^V_Y\nabla^V_XZ,W)
+g_V(\nabla^V_X\nabla^V_YW-\nabla^V_Y\nabla^V_XW,Z)\\
=X(g_V(\nabla^V_YZ,W))-g_V(\nabla^V_YZ,\nabla^V_XW)-2C_V(\nabla^V_XV,\nabla^V_YZ,W)\\
-Y(g_V(\nabla^V_XZ,W))+g_V(\nabla^V_XZ,\nabla^V_YW)+2C_V(\nabla^V_YV,\nabla^V_XZ,W)\\
+X(g_V(\nabla^V_YW,Z))-g_V(\nabla^V_YW,\nabla^V_XZ)-2C_V(\nabla^V_XV,\nabla^V_YW,Z)\\
-Y(g_V(\nabla^V_XW,Z))+g_V(\nabla^V_XW,\nabla^V_YZ)+2C_V(\nabla^V_YV,\nabla^V_XW,Z)\\
=X(g_V(\nabla^V_YZ,W)+g_V(\nabla^V_YW,Z))-Y(g_V(\nabla^V_XZ,W)+g_V(\nabla^V_XW,Z))\\
-2C_V(\nabla^V_XV,\nabla^V_YZ,W)+2C_V(\nabla^V_YV,\nabla^V_XZ,W)
\\-2C_V(\nabla^V_XV,\nabla^V_YW,Z)+2C_V(\nabla^V_YV,\nabla^V_XW,Z)\\
=X(Y(g_V(Z,W))-2C_V(\nabla^V_YV,Z,W))-Y(X(g_V(W,Z))-2C_V(\nabla^V_XV,W,Z))\\
-2C_V(\nabla^V_XV,\nabla^V_YZ,W)+2C_V(\nabla^V_YV,\nabla^V_XZ,W)
\\-2C_V(\nabla^V_XV,\nabla^V_YW,Z)+2C_V(\nabla^V_YV,\nabla^V_XW,Z)\\
=[X,Y](g_V(Z,W))+2(-\nabla^V_XC_V(\nabla^V_YV,W,Z)-C_V(\nabla^V_X\nabla^V_YV,Z,W)
\\-C_V(\nabla^V_YV,\nabla^V_XZ,W)-C_V(\nabla^V_YV,Z,\nabla^V_XW)+\nabla^V_YC_V(\nabla^V_XV,W,Z)\\+C_V(\nabla^V_Y\nabla^V_XV,Z,W)
+C_V(\nabla^V_XV,\nabla^V_YZ,W)+C_V(\nabla^V_XV,Z,\nabla^V_YW)
\\-C_V(\nabla^V_XV,\nabla^V_YZ,W)+C_V(\nabla^V_YV,\nabla^V_XZ,W)
\\-C_V(\nabla^V_XV,\nabla^V_YW,Z)+C_V(\nabla^V_YV,\nabla^V_XW,Z))\\
=2(\nabla^V_YC_V(\nabla^V_XV,W,Z))-\nabla^V_XC_V(\nabla^V_YV,W,Z)+C_V(R^V(Y,X)V,Z,W),
\end{multline*}
as we wanted to prove (since $C_V$ and $\nabla^V_EC_V$ are symmetric for any $E\in {\mathfrak X}(\Omega)$). The third identity is true for any torsion-free connection (see for example the proof in \cite[Proposition 3.36]{oneill}).
To check  \eqref{seisB}, use the third identity to deduce the
following four ones,
\begin{align*}
g_V(R^V(X,W)Y+R^V(W,Y)X+R^V(Y,X)W,Z)&=0,\\
g_V(R^V(X,Y)Z+R^V(Y,Z)X+R^V(Z,X)Y,W)&=0.\\
g_V(R^V(X,W)Z+R^V(W,Z)X+R^V(Z,X)W,Y)&=0,\\
g_V(R^V(Y,Z)W+R^V(Z,W)Y+R^V(W,Y)Z,X)&=0.
\end{align*}
Then summing up the four identities and using the symmetries of parts $(i)$ and
 $(ii)$, it comes out
\begin{multline*}
2g_V(R^V(X,Y)Z,W)-2g_V(R^V(Z,W)X,Y)+\\
2B^V(Y,Z,X,W)+2B^V(Z,X,Y,W)+2B^V(X,W,Z,Y)\\
+2B^V(W,Y,Z,X)+2B^V(Z,W,X,Y)+2B^V(Y,X,Z,W)=0.
\end{multline*} 
 Taking into account that $B^V$ is anti-symmetric in the two first components and symmetric in the two last ones
  we get \eqref{seisB}.
\end{proof}
Having at hand the affine connection $\nabla^V$ we can compute the
derivative of any tensor. In particular,
\begin{multline*}
\nabla^V_XR^V(Y,Z)W=\nabla^V_X(R^V(Y,Z)W)\\-R^V(\nabla^V_XY,Z)W
-R^V(Y,\nabla^V_XZ)W-R^V(Y,Z)(\nabla^V_XW)
\end{multline*}
for every $X,Y,Z,W\in {\mathfrak X} (\Omega)$. As $\nabla^V$ is an affine connection, $R^V$ also satisfies the Second Bianchi identity (see for example the proof in \cite[Proposition 3.36]{oneill}).
\subsection{Two parameters maps}\label{twoparam}
Let ${\mathcal D}$ be an open subset of $\R^2$ satisfying the interval condition, namely, horizontal and vertical lines of $\R^2$ intersect $\mathcal D$ in intervals. A two-parameter map is a smooth map
$\Lambda:{\mathcal D}\rightarrow M$. We will use the following notation:
\begin{enumerate}
\item the $t$-parameter curve of $\Lambda$
in $s_0$ is the curve $\gamma_{s_0}$ defined as $t\rightarrow \gamma_{s_0}(t)=\Lambda(t,s_0)$
\item the $s$ parameter curve of $\Lambda$ in $t_0$ is the curve $\beta_{t_0}$ defined as $s\rightarrow \beta_{t_0}(s)=\Lambda(t_0,s)$.
\end{enumerate}
Moreover, we will denote by $\Lambda_t(t,s)=\dot\gamma_s(t)$ and 
$\Lambda_s(t,s)=\dot\beta_t(s)$. Let us define $\Lambda^*(TM)$ as the vector bundle over ${\mathcal D}$ induced by $\pi:TM\rightarrow M$ through $\Lambda$. Then we denote the subset of smooth sections 
of $\Lambda^*(TM)$ as ${\mathfrak X}(\Lambda)$. Observe that a vector field $V\in {\mathfrak X}(\Lambda)$ induces vector fields in $\mathfrak X(\gamma_{s_0})$ and $\mathfrak X(\beta_{t_0})$.
We will say that $V$ is $L$-admissible if $V(t,s)\in A$ for every $(t,s)\in \mathcal D$.
  When $\Lambda$ lies in the domain of a coordinate system $x^1,\ldots,x^n$, we will denote $\Lambda^i=x^i\circ\Lambda$.
  \begin{prop}\label{commut}
 With the above notation, if   $V\in {\mathfrak X}(\Lambda)$ is $L$-admissible, then $D_{\gamma_s}^V{\dot\beta_t}=D_{\beta_t}^V{\dot\gamma_s}$.
  \end{prop}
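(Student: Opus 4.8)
The plan is to reduce everything to the coordinate expression \eqref{connection} for the covariant derivative along a curve and to exploit two symmetries: the equality of mixed second partial derivatives of the smooth map $\Lambda$, and the symmetry of the formal Christoffel symbols in their two lower indices. First I would fix a coordinate system $x^1,\dots,x^n$ whose domain contains a neighborhood of the point $(t,s)$ under consideration, write $\Lambda^i=x^i\circ\Lambda$, and record that in these coordinates $\dot\gamma_s=\Lambda_t$ has components $\partial\Lambda^j/\partial t$ while $\dot\beta_t=\Lambda_s$ has components $\partial\Lambda^i/\partial s$. Applying \eqref{connection} with reference vector $V$, I would obtain
\begin{equation*}
D^V_{\gamma_s}\dot\beta_t=\left(\frac{\partial^2\Lambda^k}{\partial t\,\partial s}+\frac{\partial\Lambda^i}{\partial s}\,\frac{\partial\Lambda^j}{\partial t}\,\Gamma_{\,\,ij}^k(V)\right)\frac{\partial}{\partial x^k},
\end{equation*}
and, symmetrically,
\begin{equation*}
D^V_{\beta_t}\dot\gamma_s=\left(\frac{\partial^2\Lambda^k}{\partial s\,\partial t}+\frac{\partial\Lambda^i}{\partial t}\,\frac{\partial\Lambda^j}{\partial s}\,\Gamma_{\,\,ij}^k(V)\right)\frac{\partial}{\partial x^k}.
\end{equation*}

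Next I would compare the two expressions term by term. The derivative terms coincide because $\Lambda$ is smooth, so that $\partial^2\Lambda^k/\partial t\,\partial s=\partial^2\Lambda^k/\partial s\,\partial t$. For the remaining terms I would relabel the dummy indices $i\leftrightarrow j$ in one of them and invoke the symmetry $\Gamma_{\,\,ij}^k(V)=\Gamma_{\,\,ji}^k(V)$; this turns $\frac{\partial\Lambda^i}{\partial t}\frac{\partial\Lambda^j}{\partial s}\Gamma_{\,\,ij}^k(V)$ into $\frac{\partial\Lambda^i}{\partial s}\frac{\partial\Lambda^j}{\partial t}\Gamma_{\,\,ij}^k(V)$, matching the first. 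The symmetry of the symbols is exactly the torsion-free condition of $\nabla^V$ from Proposition~\ref{existconn}: evaluating $\nabla^V_{\partial/\partial x^i}(\partial/\partial x^j)-\nabla^V_{\partial/\partial x^j}(\partial/\partial x^i)=[\partial/\partial x^i,\partial/\partial x^j]=0$ yields $\Gamma_{\,\,ij}^k(V)=\Gamma_{\,\,ji}^k(V)$. Hence the two quadratic terms agree and the identity follows.

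I do not expect a genuine obstacle here; the computation is routine once the coordinate formula is in place. The one point requiring care is that the reference vector is the \emph{same} $V\in{\mathfrak X}(\Lambda)$ in both covariant derivatives, so that its restrictions to $\gamma_s$ and to $\beta_t$ feed identical pointwise Christoffel symbols $\Gamma_{\,\,ij}^k(V(t,s))$ into the two computations. This is guaranteed by Proposition~\ref{curveconnection}, where it is shown (via \eqref{gamma}) that the symbols depend only on the value $V(t,s)$ and not on any extension, which is precisely what makes $D^V_{\gamma_s}$ and $D^V_{\beta_t}$ well defined from the single field $V$. A secondary remark is that, since \eqref{connection} is only the coordinate form while the operator is coordinate-independent by Remark~\ref{sintetico}, the identity obtained locally is in fact intrinsic and holds on all of $\mathcal D$, including where one must change coordinate patch.
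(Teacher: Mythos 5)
Your proof is correct and follows essentially the same route as the paper: apply the coordinate formula \eqref{connection} to both covariant derivatives and conclude by the equality of mixed partials of $\Lambda$ together with the symmetry $\Gamma_{\,\,ij}^k(V)=\Gamma_{\,\,ji}^k(V)$ coming from torsion-freeness. Your added remarks (that the symbols depend only on the pointwise value $V(t,s)$, and that the identity is coordinate-independent) are accurate elaborations of points the paper leaves implicit.
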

  \begin{proof}
  Using Proposition \ref{curveconnection} we get
\begin{align*}
  D_{\gamma_s}^V{\dot\beta_t}=\left(\frac{\partial^2\Lambda^k}{\partial t\partial s}+\Gamma_{\,\,ij}^k(V)\frac{\partial\Lambda^i}{\partial s}\frac{\partial\Lambda^j}{\partial t}\right) \frac{\partial}{\partial x^k},\\
  D_{\beta_t}^V{\dot\gamma_s}=\left(\frac{\partial^2\Lambda^k}{\partial s\partial t}+\Gamma_{\,\,ij}^k(V)\frac{\partial\Lambda^i}{\partial t}\frac{\partial\Lambda^j}{\partial s}\right) \frac{\partial}{\partial x^k}.
  \end{align*}
  Both quantities coincide because $\Gamma_{\,\,ij}^k(V)$ is symmetric in $i,j$ and $\frac{\partial^2\Lambda^k}{\partial s\partial t}=\frac{\partial^2\Lambda^k}{\partial t\partial s}$.
  \end{proof}
\subsection{Jacobi operator and flag curvature}\label{flag}
In a fixed point $p\in M$, the curvature tensor $R^V$ depends not only on $V(p)$ but on the extension V. Let us see that the quantity $R^{V}(V,U)W$ depends only on the value of $V$  along the integral curve of $V$.
\begin{prop}\label{curvaturacurva}
Let $(M,L)$ be a pseudo-Finsler manifold, $\gamma:(a-\varepsilon,a+\varepsilon)\rightarrow M$ an $L$-admissible smooth curve
and $u,w\in T_{\gamma(a)}M$. If $V$ is an $L$-admissible extension of $\dot\gamma$ 
and $U$ and $W$ extensions of  $u$ and $w$, then
\[R^{\gamma}(\dot\gamma(a),u)w:=R^V_{\gamma(a)}(V,U)W\]
 is well-defined, namely, it does not depend on the extensions used to compute it.
\end{prop}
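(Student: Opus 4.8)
The plan is to exploit that $R^V$ is the curvature tensor of a genuine affine connection, so it is tensorial ($C^\infty(\Omega)$-linear) in each of its three arguments. In particular $R^V_{\gamma(a)}(V,U)W$ depends on $U$ and $W$ only through the vectors $u=U(\gamma(a))$ and $w=W(\gamma(a))$, which settles at once the independence on the extensions of $u$ and $w$. I would thus reduce the statement to the following: if $V$ and $\tilde V$ are two $L$-admissible extensions of $\dot\gamma$, that is $V\circ\gamma=\tilde V\circ\gamma=\dot\gamma$, then $R^V_{\gamma(a)}(V,U)W=R^{\tilde V}_{\gamma(a)}(\tilde V,U)W$.

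Next I would compute in a chart $(\Omega,\varphi)$ around $\gamma(a)$. Writing $\bar\Gamma^k_{ij}(x):=\Gamma^k_{ij}(V(x))$ for the Christoffel symbols of $\nabla^V$ as honest functions on $\Omega$ (legitimate by Proposition~\ref{curveconnection}, since the formal Christoffel symbols depend only on the pointwise value of $V$), the curvature reads
\[
R^V(V,U)W=V^iU^jW^k\Big(\partial_i\bar\Gamma^l_{jk}-\partial_j\bar\Gamma^l_{ik}+\bar\Gamma^l_{im}\bar\Gamma^m_{jk}-\bar\Gamma^l_{jm}\bar\Gamma^m_{ik}\Big)\frac{\partial}{\partial x^l}.
\]
Evaluated at $\gamma(a)$, the quadratic terms and the values $\bar\Gamma^k_{ij}(\gamma(a))=\Gamma^k_{ij}(\dot\gamma(a))$ involve only $\dot\gamma(a)$, so all the extension-dependence is confined to the first-order terms. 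Using the chain rule $\partial_i\bar\Gamma^l_{jk}=\partial_{x^i}\Gamma^l_{jk}+(\partial_iV^m)\,\partial_{y^m}\Gamma^l_{jk}$ (with the $y$-argument set to $V$), I would split each derivative into a horizontal piece, the $\partial_x$-derivative at $y=V(\gamma(a))$, which is again a function of $\dot\gamma(a)$ only, and a vertical piece carrying the derivatives $\partial_iV^m$ of the field.

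The decisive point is then to analyse the two surviving vertical contributions. The one arising from $\nabla^V_V\nabla^V_UW$ carries the factor $V^i\partial_iV^m$, the derivative of $V$ along its own direction; since $V\circ\gamma=\dot\gamma$ this equals $\tfrac{d}{dt}\dot\gamma^m|_a$ and is therefore determined by the $1$-jet of $\dot\gamma$ at $a$, hence by $V|_\gamma$. The contribution from $\nabla^V_U\nabla^V_VW$ carries instead a factor $\partial_jV^m$ that a priori also involves derivatives of $V$ transverse to $\gamma$; its coefficient is $V^i\,\partial_{y^m}\Gamma^l_{ik}$, which, by differentiating the identity $N^l_k=V^i\Gamma^l_{ik}$ from \eqref{vi}, I would rewrite as $\partial_{y^m}N^l_k-\Gamma^l_{mk}$ (evaluated at $V$). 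I expect the heart of the proof, and the main obstacle, to be showing that this coefficient contracts away against the remaining indices, using the zero-degree homogeneity of the $\Gamma^l_{ik}$ and the Cartan identities \eqref{porvenir}, so that the genuinely transverse derivatives of the field drop out and only quantities determined along the integral curve of $V$ survive. This is exactly the step where the specific structure of the Chern connection encoded in \eqref{gamma} enters, rather than that of an arbitrary affine connection.

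As a more conceptual alternative I would use two-parameter maps. Choosing $\Lambda$ whose $t$-curves are the integral curves of $V$, so that the reference field is $\Lambda_t=V\circ\Lambda$ and $\gamma_0=\gamma$, with $\Lambda_s(a,0)=u$ and $Z$ a field along $\Lambda$ with $Z(a,0)=w$, a computation in the spirit of Proposition~\ref{commut} identifies $R^V_{\gamma(a)}(V,U)W$ with the commutator $D^V_{\gamma_s}D^V_{\beta_t}Z-D^V_{\beta_t}D^V_{\gamma_s}Z$ at $(a,0)$. The advantage is that each covariant derivative along a curve depends, by Proposition~\ref{curveconnection}, only on the values of the reference field $V$ along that curve; the remaining task, and again the crux, is to organise the argument so that this dependence is visibly confined to $V|_\gamma$.
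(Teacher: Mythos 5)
Your use of tensoriality to dispose of the extensions $U,W$ and your chain-rule decomposition are both correct, and you have isolated exactly the dangerous term: the vertical contribution of $\nabla^V_U\nabla^V_VW$, which at $\gamma(a)$ reads $-W^k\,(U^j\partial_jV^m)\,v^i\partial_{y^m}\Gamma^l_{ik}(v)\,\frac{\partial}{\partial x^l}$. But there the proposal stops: you only \emph{expect} the coefficient $v^i\partial_{y^m}\Gamma^l_{ik}(v)=\partial_{y^m}N^l_k(v)-\Gamma^l_{km}(v)$ to ``contract away'' by homogeneity and \eqref{porvenir}, and that deferred step is precisely where all the content of the proposition lies, so this is a genuine gap. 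Worse, the mechanism you invoke is not available. Zero-homogeneity of the $\Gamma^l_{ik}$ gives $v^m\partial_{y^m}\Gamma^l_{ik}(v)=0$, i.e.\ it kills the contraction of the \emph{derivative} index $m$ with $v$; in your term, however, $m$ is contracted with the transverse derivative $U^j\partial_jV^m$, while $v$ sits in a lower index of $\Gamma$. The quantity $\partial_{y^m}N^l_k-\Gamma^l_{km}$ is the difference between the Berwald and the Chern connection coefficients, a Landsberg-type tensor (the covariant derivative of the Cartan tensor in the spray direction, cf.\ \cite[Chapter 3]{BaChSh00}): it annihilates $v$ in each slot, but it does \emph{not} vanish for a general pseudo-Finsler metric, and in your term it is contracted with $W^k$ and with $U^j\partial_jV^m$, neither of which need be proportional to $v$. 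So along the chart route the transverse derivatives of $V$ survive multiplied by a generically nonzero tensor, and no identity of the Chern connection makes them drop out for arbitrary $u,w$.

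The paper's actual proof follows your ``conceptual alternative'', and the step your sketch leaves unspecified is exactly how that derivative is eliminated: taking $V=\Lambda_t$, $U=\Lambda_s$ for a two-parameter map (so $[V,U]=0$ along $\Lambda$), equality of mixed partials gives $U^j\partial_jV^m=\Lambda_{ts}^m=\Lambda_{st}^m=\frac{d}{dt}\big(U^m|_\gamma\big)$, so that in \eqref{curvatura}, after inserting \eqref{firstpartial} and \eqref{secondpartial}, every occurrence of $\partial\Gamma^k_{ij}/\partial y^p$ is contracted either with $\Lambda_{tt}=\frac{d}{dt}\dot\gamma$ or with $\Lambda_{ts}=\frac{d}{dt}(U|_\gamma)$, i.e.\ with data along $\gamma$; the paper then combines this with tensoriality in $U,W$ and with the existence, for each fixed $V$, of commuting extensions (coordinates adapted to $V$). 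You should be aware, though, that this manoeuvre converts your obstruction rather than removes it: the resulting expression still depends on $\frac{d}{dt}(U|_\gamma)$, the commuting partners attached to two different extensions $V,\tilde V$ of $\dot\gamma$ have different such derivatives, and their discrepancy is governed by the very same Landsberg-type tensor. That term genuinely disappears only in the contractions the paper uses afterwards---third slot equal to $\dot\gamma$, or pairing with $g_v(\cdot,\dot\gamma)$, as in Theorem~\ref{curvaturaway} (where $U$ is parallel) and Corollary~\ref{flagcur}---because the tensor annihilates $v$. So your diagnosis of where the difficulty lies is accurate, but neither homogeneity nor \eqref{porvenir} resolves it; a complete argument must confront this term head-on rather than hope it cancels.
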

\begin{proof}
As the result is local, we can assume that the image of $\gamma$ is contained in an open subset $\Omega$ that admits a system of coordinates $(\Omega,\varphi)$. First assume that $V$ and $U$  are the variational vector fields of the two-parametric variation of $\gamma$,
$\Lambda:(a-\varepsilon,a+\varepsilon)\times (-\varepsilon_1,\varepsilon_1)\rightarrow M$, $(t,s)\rightarrow \Lambda(t,s)$, namely, 
$V(\Lambda(t,s))=\Lambda_t(t,s)$ and $U(\Lambda(t,s))=\Lambda_s(t,s)$  for every $(t,s)\in (a-\varepsilon,a+\varepsilon)\times (-\varepsilon_1,\varepsilon_1)$ and the image of $\Lambda$ lies in $\Omega$ (recall notation in Subsection \ref{twoparam}). We can also assume that the curves $\gamma_s$ are $L$-admissible for $s\in (-\varepsilon_1,\varepsilon_1)$  by taking $\varepsilon_1$ small enough and
 that $W\in{\mathfrak X}(\Lambda)$. We will denote by $W^i$ the coordinates of $W$ in $(\Omega,\varphi)$, being $W^i_t$ and $W^i_s$ the partial derivatives with respect to the parameters of the variation $t$ and $s$. Then using \eqref{connection} twice we get
\begin{multline*}
D_{\beta_{t}}^{\Lambda_t}D_{\gamma_{s}}^{\Lambda_t}W=
\left[W_{ts}^k+W_{s}^i\Lambda_t^j \Gamma_{\,\,ij}^k(\Lambda_t)
+W^i\Lambda_{ts}^j \Gamma_{\,\,ij}^k(\Lambda_t)\right.\\\left.
+W^i\Lambda_t^j \frac{\partial}{\partial s}\Gamma_{\,\,ij}^k(\Lambda_t)
+W_{t}^l\Lambda_s^m \Gamma_{\,\,lm}^k(\Lambda_t)\right.\\\left.+W^i\Lambda_t^j \Lambda_s^m \Gamma_{\,\,ij}^l(\Lambda_t) \Gamma_{\,\,lm}^k(\Lambda_t)\right]\frac{\partial}{\partial x^k}
\end{multline*}
and 
\begin{multline*}
D_{\gamma_{s}}^{\Lambda_t}D_{\beta_{t}}^{\Lambda_t}W=
\left[W_{st}^k+W_{t}^i\Lambda_s^j \Gamma_{\,\,ij}^k(\Lambda_t)
+W^i\Lambda_{st}^j \Gamma_{\,\,ij}^k(\Lambda_t)\right.\\\left.
+W^i\Lambda_s^j \frac{\partial}{\partial t}\Gamma_{\,\,ij}^k(\Lambda_t)
+W_{s}^l\Lambda_t^m \Gamma_{lm}^k(\Lambda_t)\right.\\\left.+W^i\Lambda_s^j \Lambda_t^m \Gamma_{\,\,ij}^l(\Lambda_t) \Gamma_{\,\,lm}^k(\Lambda_t)\right]\frac{\partial}{\partial x^k}.
\end{multline*}
 Then
 \begin{multline}\label{curvatura}
 D_{\gamma_{s}}^{\Lambda_t}
 D_{\beta_{t}}^{\Lambda_t}W-D_{\beta_{t}}^{\Lambda_t}D_{\gamma_{s}}^{\Lambda_t}W=
\left[W^i\Lambda_s^j \frac{\partial}{\partial t}\Gamma_{\,\,ij}^k(\Lambda_t)-W^i\Lambda_t^j \frac{\partial}{\partial s}\Gamma_{\,\,ij}^k(\Lambda_t)\right.\\\left.
 +W^i\Lambda_s^j \Lambda_t^m \left(\Gamma_{\,\,ij}^l(\Lambda_t) \Gamma_{\,\,lm}^k(\Lambda_t)-\Gamma_{\,\,im}^l(\Lambda_t) \Gamma_{\,\,lj}^k(\Lambda_t)\right)\right]\frac{\partial}{\partial x^k}.
 \end{multline}
Define $\tilde{\Gamma}_{\,\,ij}^l(p)=\Gamma_{\,\,ij}^l(V(p))$ for every $p\in\Omega$ and observe that 
$\tilde{\Gamma}_{\,\,ij}^l$ are the Christoffel symbols of the affine connection $\nabla^V$ in $\Omega$. In particular, we have that  $\tilde{\Gamma}_{\,\,ij}^l(\Lambda)=\Gamma_{\,\,ij}^l(\Lambda_t)$. Taking into account that $[V,U]=0$ (at least in the points $\Lambda(t,s)$),
 we get
 \begin{multline*}
 R^V(V,U)W=D^V_VD_U^VW-D^V_UD_V^VW=\\
 \left[W^i\Lambda_s^j\Lambda_t^p \frac{\partial\tilde{\Gamma}_{\,\,ij}^k}{\partial x^p}
 (\Lambda)-W^i\Lambda_t^j \Lambda_s^p\frac{\partial \tilde{\Gamma}_{\,\,ij}^k}{\partial x^p}(\Lambda)\right.\\\left.
 +W^i\Lambda_s^j \Lambda_t^m \left(\tilde{\Gamma}_{\,\,ij}^l(\Lambda) \tilde{\Gamma}_{\,\,lm}^k(\Lambda)-\tilde{\Gamma}_{\,\,im}^l(\Lambda) \tilde{\Gamma}_{\,\,lj}^k(\Lambda)\right)\right]\frac{\partial}{\partial x^k}.
 \end{multline*}
 As $\tilde{\Gamma}_{\,\,ij}^l(\Lambda)=\Gamma_{\,\,ij}^l(\Lambda_t)$, $\frac{\partial}{\partial t}\Gamma_{\,\,ij}^k(\Lambda_t)=\Lambda_t^p\frac{\partial \tilde{\Gamma}_{\,\,ij}^k}{\partial x^p}(\Lambda)$ and $\frac{\partial}{\partial s}\Gamma_{\,\,ij}^k(\Lambda_t)=\Lambda_s^p\frac{\partial \tilde{\Gamma}_{\,\,ij}^k}{\partial x^p}(\Lambda)$, we conclude that 
 \[R^V(V,U)W=D_{\gamma_{s}}^{\Lambda_t}
 D_{\beta_{t}}^{\Lambda_t}W-D_{\beta_{t}}^{\Lambda_t}D_{\gamma_{s}}^{\Lambda_t}W\]
 as required.
 Moreover,
 \begin{align}\label{firstpartial}
 \frac{\partial}{\partial t}\Gamma_{\,\,ij}^k(\Lambda_t)&=\Lambda_t^p \frac{\partial \Gamma_{\,\,ij}^k}{\partial x^p}(\Lambda_t)+\Lambda_{tt}^p \frac{\partial \Gamma_{\,\,ij}^k}{\partial y^p}(\Lambda_t),\\
 \label{secondpartial}
 \frac{\partial}{\partial s}\Gamma_{\,\,ij}^k(\Lambda_t)&=\Lambda_s^p \frac{\partial \Gamma_{\,\,ij}^k}{\partial x^p}(\Lambda_t)+\Lambda_{ts}^p \frac{\partial \Gamma_{\,\,ij}^k}{\partial y^p}(\Lambda_t).
 \end{align}
Here recall that $x^1,\ldots,x^n,y^1,\ldots,y^n$ is the natural coordinate system of $T\Omega$ associated to the coordinate system $(\Omega,\varphi)$. From the above equations and \eqref{curvatura}, it follows that $R^V(V,U)W$ depends only on the curve $\gamma$ and the values of the vector fields $U$ and $W$ along $\gamma$. Now given any vector field $V$ extending $\dot\gamma$, observe that the value of $R^V(V,U)W$ does not depend on the extensions  $U$ and $W$ of the vectors $u,w\in T_{\gamma(a)}M$. It is always possible to get extensions such that $V$ and $U$ are the variational vector fields of a two-parametric map and $W$ a smooth vector field on it. Indeed, consider a system of coordinates adapted to $V$ in a neighborhood $\Omega$ of $\gamma(a)$ small enough, in the sense that $V=\frac{\partial}{\partial x^1}$ in this neighborhood. If 
 $u=a^i \left.\frac{\partial}{\partial x^i}\right|_{\gamma(a)}$ and 
 $w= b^i \left.\frac{\partial}{\partial x^i}\right|_{\gamma(a)},$ then  $U= a^i \frac{\partial}{\partial x^i}$ and $W= b^i \frac{\partial}{\partial x^i}$ are the required extensions in $\Omega\subset M$.
\end{proof}
Let us recall that the Chern connection can also be interpreted as a connection in the fiber bundle $\pi^*_A:\pi_A^*(TM)\rightarrow A$ (see for example \cite[Remark 2.5]{JavSoares13}) and we can define the curvature 2-forms associated to this connection. In particular, the horizontal part of these 2-forms is the so-called $hh$-curvature tensor (see \cite[Chapter 3]{BaChSh00}), which in coordinates is written as
\begin{equation}\label{curvchern}
R_v(V,U)W=V^j U^k W^l R_{j\,\,kl}^{\,\,\,i}(v) \frac{\partial}{\partial x^i}
\end{equation}
for $v\in A$ and $V=V^i \frac{\partial}{\partial x^i},U= U^i \frac{\partial}{\partial x^i},W= W^i \frac{\partial}{\partial x^i}$  vector fields in $\Omega\subset M$, where 
\begin{multline*}
R_{j\,\,kl}^{\,\,\,i}(v)= \frac{\partial \Gamma_{\,\,jl}^i}{\partial x^k}(v)-N_{\,\,k}^p(v) \frac{\partial \Gamma_{\,\,jl}^i}{\partial y^p}(v)-
\frac{\partial \Gamma_{\,\,jk}^i}{\partial x^l}(v)+N_{\,\,l}^p(v) \frac{\partial \Gamma_{\,\,jk}^i}{\partial y^p}(v)\\
 +\Gamma_{\,\,hk}^i(v)\Gamma_{\,\,jl}^h(v)-\Gamma_{\,\,hl}^i(v)\Gamma_{\,\,jk}^h(v),
\end{multline*}
and $N_{\,\,k}^p$ has been defined in \eqref{vi} (see \cite[Formula (3.3.2) and Exercise 3.9.6]{BaChSh00}). Now given a system of coordinates $(\Omega,\varphi)$, for every $L$-admissible smooth curve $\gamma:[a,b]\rightarrow \Omega\subset M$,  define 
\[H_\gamma (U,W)=U^i W^j (D_\gamma^{\dot\gamma}\dot\gamma)^p \frac{\partial\Gamma_{\,\,ij}^k}{\partial y^p}(\dot\gamma)\frac{\partial}{\partial x^k}.\]
It is easy to see that the definition does not depend on the choice of coordinates and then we can define a symmetric tensor $H_\gamma:{\mathfrak X}(\gamma)\times {\mathfrak X}(\gamma)\rightarrow {\mathfrak X}(\gamma)$ for every $L$-admissible smooth curve $\gamma:[a,b]\rightarrow M$ that does not lie necessarily in a domain of coordinates.
\begin{thm}\label{curvaturaway}
Let $(M,L)$ be a pseudo-Finsler manifold. Consider an $L$-ad\-mis\-sible smooth curve $\gamma:(a-\varepsilon,a+\varepsilon)\rightarrow M$. With the above notation
\begin{equation}\label{curformula}
R^\gamma(\dot\gamma(a),u)w=R_{\dot\gamma(a)}(\dot\gamma(a),u)w+H_\gamma(u,w)
\end{equation}
for any $u,w\in T_{\gamma(a)}M$.
\end{thm}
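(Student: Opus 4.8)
The plan is to reduce everything to the explicit coordinate formula for the affine curvature already obtained in Proposition~\ref{curvaturacurva}, and then to recognize the Chern $hh$-curvature inside it, the discrepancy being exactly $H_\gamma$. Since the statement is local and pointwise at $\gamma(a)$, and since $R^\gamma(\dot\gamma(a),u)w=R^V_{\gamma(a)}(V,U)W$ is independent of the chosen extensions, I would fix a chart $(\Omega,\varphi)$ and take $V,U$ to be the variational vector fields of a two-parameter variation $\Lambda$ of $\gamma$ with $W\in{\mathfrak X}(\Lambda)$, exactly as in the proof of Proposition~\ref{curvaturacurva}. Then $R^\gamma(\dot\gamma(a),u)w$ is the right-hand side of \eqref{curvatura} evaluated at $(t,s)=(a,0)$, where $\Lambda_t=\dot\gamma=v$, $\Lambda_s=u$ and $W=w$.

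The first step is to insert the chain-rule expansions \eqref{firstpartial} and \eqref{secondpartial} into \eqref{curvatura}. This replaces $\frac{\partial}{\partial t}\Gamma^k_{ij}(\Lambda_t)$ and $\frac{\partial}{\partial s}\Gamma^k_{ij}(\Lambda_t)$ by their base-point parts $\Lambda_t^p\frac{\partial\Gamma^k_{ij}}{\partial x^p}$, $\Lambda_s^p\frac{\partial\Gamma^k_{ij}}{\partial x^p}$ and their fibre parts weighted by the second-order data $\Lambda_{tt}^p=\ddot\gamma^p$ and $\Lambda_{ts}^p$. The decisive observation is that the $t$-derivative differentiates $\Gamma$ along $\gamma$ in its own velocity direction $v$; using the definition \eqref{vi} of the nonlinear connection in the form $N^p_k=v^a\Gamma^p_{ak}$, together with the identity $(D_\gamma^{\dot\gamma}\dot\gamma)^q=\ddot\gamma^q+\Gamma^q_{ab}(v)\,v^av^b$ for the covariant acceleration, I would rewrite
\[
v^p\frac{\partial\Gamma^k_{ij}}{\partial x^p}+\ddot\gamma^q\frac{\partial\Gamma^k_{ij}}{\partial y^q}
=v^p\Big(\frac{\partial\Gamma^k_{ij}}{\partial x^p}-N^q_p\frac{\partial\Gamma^k_{ij}}{\partial y^q}\Big)+(D_\gamma^{\dot\gamma}\dot\gamma)^q\frac{\partial\Gamma^k_{ij}}{\partial y^q}.
\]
The second summand, once contracted with $w^iu^j$ and using the symmetry of $\Gamma^k_{ij}$ in its lower indices, is precisely $H_\gamma(u,w)$; this is where the acceleration correction is born, and it is manifestly vertical, i.e. built only from fibre derivatives of $\Gamma$.

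The remaining terms must be assembled into the Chern $hh$-curvature $R_{\dot\gamma(a)}(\dot\gamma(a),u)w$ of \eqref{curvchern}. I would treat the $s$-derivative in the same way, and here one must check that the contributions involving $\Lambda_{ts}$ (equivalently, the derivatives of the extension $V$) either cancel or combine into the nonlinear-connection terms $-N^p_k\frac{\partial\Gamma^i_{jl}}{\partial y^p}+N^p_l\frac{\partial\Gamma^i_{jk}}{\partial y^p}$ of \eqref{curvchern}; their eventual disappearance is forced by the extension-independence already proved in Proposition~\ref{curvaturacurva}, which is a convenient consistency check. What is then left are the base-point derivatives of $\Gamma$ and the quadratic terms $\Gamma^i_{hk}\Gamma^h_{jl}-\Gamma^i_{hl}\Gamma^h_{jk}$, which I would match with the formula for $R^{\,\,\,i}_{j\,\,kl}(v)$, repeatedly using the symmetry of $\Gamma^k_{ij}$ in its lower indices, the total symmetry and the orthogonality relations \eqref{porvenir} of the Cartan tensor, and the zero-homogeneity of $\Gamma$ in $v$ (so that $v^q\frac{\partial\Gamma^k_{ij}}{\partial y^q}=0$).

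I expect this last matching to be the main obstacle: the affine curvature $R^V$ carries its antisymmetry in the first two slots whereas $R^{\,\,\,i}_{j\,\,kl}$ carries its antisymmetry in $k,l$, so the identification is not termwise but requires a reorganization of the horizontal-derivative and quadratic terms, exploiting the special role of $v$ as simultaneously the reference vector and the first argument (and, if needed, the first Bianchi identity of Proposition~\ref{propcurvature}(iii)). Once the two index patterns are aligned, the identity \eqref{curformula} follows, with $H_\gamma(u,w)$ the vertical term produced by the non-vanishing of $D_\gamma^{\dot\gamma}\dot\gamma$; in particular it vanishes exactly when $\gamma$ is a geodesic.
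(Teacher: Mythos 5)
Your plan does track the paper's own proof for most of its length: like the paper, you start from \eqref{curvatura}, insert the chain rules \eqref{firstpartial}--\eqref{secondpartial}, and your treatment of the $t$-derivative (rewriting $\Lambda_{tt}^p=\ddot\gamma^p$ through the covariant acceleration so that the horizontal derivative $\frac{\partial}{\partial x^p}-N_{\,\,p}^q\frac{\partial}{\partial y^q}$ appears and $H_\gamma(u,w)$ splits off) is exactly the paper's computation. The gap is in the $s$-derivative, and it is not a technicality but the one idea the proof needs. That derivative contributes $-W^i\Lambda_t^j\Lambda_{ts}^p\frac{\partial\Gamma_{\,\,ij}^k}{\partial y^p}$, where by \eqref{connection} $\Lambda_{ts}^p=\frac{dU^p}{dt}=(D_\gamma^{\dot\gamma}U)^p-u^lN_{\,\,l}^p(\dot\gamma)$ and $U$ is the extension of $u$ along $\gamma$. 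The paper \emph{chooses} $U$ to be $D_\gamma^{\dot\gamma}$-parallel (a choice licensed by Proposition~\ref{curvaturacurva}); then $\Lambda_{ts}^p=-u^lN_{\,\,l}^p(\dot\gamma)$ and this term becomes precisely the missing nonlinear-connection piece $+u^lN_{\,\,l}^p\,\dot\gamma^jw^i\frac{\partial\Gamma_{\,\,ij}^k}{\partial y^p}$ of \eqref{curvchern}. For any other extension there remains the extra term $-w^i\dot\gamma^j(D_\gamma^{\dot\gamma}U)^p\frac{\partial\Gamma_{\,\,ij}^k}{\partial y^p}(\dot\gamma)$, and the contraction $\dot\gamma^j\frac{\partial\Gamma_{\,\,ij}^k}{\partial y^p}$ does \emph{not} vanish in general: it is the difference between the Berwald and the Chern connection coefficients (a Landsberg-type tensor), and it must not be confused with the homogeneity identity $\dot\gamma^p\frac{\partial\Gamma_{\,\,ij}^k}{\partial y^p}=0$, where the contraction is on the derivative index. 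So your assertion that the $\Lambda_{ts}$-contributions ``either cancel or combine into the nonlinear-connection terms,'' with their disappearance ``forced by the extension-independence,'' is not an argument and is false termwise for a generic extension: extension-independence, even granted, only \emph{licenses} a convenient choice of $U$; it does not perform the cancellation, and performing it by the parallel choice is exactly the step your proposal leaves out.

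By contrast, the obstacle you anticipate at the end does not arise. Once $U$ is parallel, the six terms produced by \eqref{curvatura} match the six terms of $R_{j\,\,kl}^{\,\,\,i}$ one-to-one, using nothing but the symmetry of $\Gamma_{\,\,ij}^k$ in its lower indices, with $H_\gamma(u,w)$ left over; the contraction that comes out carries $w$ on the endomorphism index $j$ and $\dot\gamma,u$ on the $2$-form indices $k,l$ (as the curvature of a linear connection should), and this is how the paper's final display is to be read against \eqref{curvchern}. No reorganization of horizontal and quadratic terms, no appeal to the first Bianchi identity of Proposition 3.1, and no homogeneity argument are needed: \eqref{curformula} is an identity between two particular contractions, not between tensors sharing their symmetry type, so the apparent clash of antisymmetries never has to be resolved.
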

\begin{proof}
First observe that we can choose any extension of $u$ to the curve $\gamma$. In particular, we can choose a parallel vector field $U=U^i(t) \left.\frac{\partial}{\partial x^i}\right|_{\gamma(t)}$ along $\gamma$, which satisfies
$\frac{d U^k}{dt}=-U^i\dot\gamma^j\Gamma_{\,\, ij}^k(\dot\gamma)$. Moreover,  $\frac{d \dot\gamma^k}{dt}=(D_\gamma^{\dot\gamma}\dot\gamma)^k-\dot\gamma^i\dot\gamma^j\Gamma_{\,\, ij}^k(\dot\gamma)$. Let $\Lambda$ be a two-parametric  variation of $\gamma$ such that $\dot\beta_t=U$ and $\gamma_s$ are $L$-admissible curves (recall notation of Subsection \ref{twoparam}). As $\Lambda_{tt}^p=\frac{d \dot\gamma^p}{dt}$ and 
$\Lambda_{ts}^p=\frac{d U^p}{dt}$, substituting the last formulae in \eqref{firstpartial} and \eqref{secondpartial}, these equations in \eqref{curvatura} and making $s=0$, we obtain
\begin{multline*}
R^\gamma(\dot\gamma,u)w=\left[u^i w^j\left(\dot\gamma^ p\frac{\partial \Gamma_{\,\,ij}^k}{\partial x^p}(\dot\gamma)-\dot\gamma^l\dot\gamma^m
\Gamma_{\,\,lm}^p(\dot\gamma) \frac{\partial \Gamma_{\,\,ij}^k}{\partial y^p}(\dot\gamma)\right)\right.\\
-w^i \dot\gamma^j\left(
u^p\frac{\partial \Gamma_{\,\,jk}^i}{\partial x^p}(\dot\gamma)+u^l\dot\gamma^m
\Gamma_{\,\,lm}^p(\dot\gamma) \frac{\partial \Gamma_{\,\,ij}^k}{\partial y^p}(\dot\gamma)\right)\\
 +w^i u^j \dot\gamma^m\left(\Gamma_{\,\,ij}^l(\dot\gamma)\Gamma_{\,\,lm}^k(\dot\gamma)-\Gamma_{\,\,im}^l(\dot\gamma)\Gamma_{\,\,lj}^k(\dot\gamma)\right)\left]\frac{\partial}{\partial x^k}\right.
 +H_\gamma(u,w).
 \end{multline*}
 Observing that $N_{\,\, l}^p(\dot\gamma)=\dot\gamma^m
\Gamma_{\,\,lm}^p(\dot\gamma)$ (see \eqref{vi}), we get \eqref{curformula}.
\end{proof}
This theorem can be used to compute the flag curvature using $\nabla^V$ and $R^V$.
\begin{cor}\label{flagcur}
Given a plane $\pi={\rm span}\{v,u\}$ which is $g_v$-nondegenerate,   the quantity
\[K_v(u)=\frac{g_v(R^{\gamma_v}(v,u)u,v)}{L(v)g_v(u,u)-g_v(v,u)^2},\]
where $\gamma_v$ is the geodesic with velocity $v$ at $t=0$, is the flag curvature of $\pi$ with flagpole $v$.
\end{cor}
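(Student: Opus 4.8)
The plan is to reduce the statement to Theorem~\ref{curvaturaway} by using that $\gamma_v$ is a geodesic, so that the tensor $H_\gamma$ appearing in \eqref{curformula} drops out. First I would record that a geodesic satisfies $D_{\gamma_v}^{\dot\gamma_v}\dot\gamma_v=0$, and then inspect the defining formula of the symmetric tensor $H_{\gamma_v}$,
\[
H_{\gamma_v}(u,w)=u^i w^j\,(D_{\gamma_v}^{\dot\gamma_v}\dot\gamma_v)^p\,\frac{\partial\Gamma_{\,\,ij}^k}{\partial y^p}(\dot\gamma_v)\,\frac{\partial}{\partial x^k}.
\]
Every summand carries the factor $(D_{\gamma_v}^{\dot\gamma_v}\dot\gamma_v)^p$, so $H_{\gamma_v}\equiv 0$ along a geodesic. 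Substituting this into \eqref{curformula} collapses Theorem~\ref{curvaturaway} to the identity $R^{\gamma_v}(v,u)w=R_v(v,u)w$; that is, along a geodesic the affine curvature $R^V$ agrees exactly with the $hh$-part of the Chern curvature.

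The second ingredient is the normalization of the flagpole. Since $L$ is positive homogeneous of degree two, differentiating Euler's identity $y^i\partial L/\partial y^i=2L$ once more yields $y^i g_{ij}(v)=\tfrac12\,\partial L/\partial y^j$ and hence $g_v(v,v)=L(v)$. Therefore the denominator $L(v)g_v(u,u)-g_v(v,u)^2$ in the statement is exactly the Gram determinant $g_v(v,v)g_v(u,u)-g_v(v,u)^2$ of the pair $(v,u)$ with respect to $g_v$, which is nonzero precisely because $\pi$ is assumed $g_v$-nondegenerate. Combining the two ingredients, I would write
\[
\frac{g_v(R^{\gamma_v}(v,u)u,v)}{L(v)\,g_v(u,u)-g_v(v,u)^2}=\frac{g_v(R_v(v,u)u,v)}{g_v(v,v)\,g_v(u,u)-g_v(v,u)^2},
\]
and identify the right-hand side with the standard coordinate expression of the flag curvature of the flag $(\pi,v)$ in terms of the Chern $hh$-curvature tensor (cf.\ \cite[Chapter 3]{BaChSh00}). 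This shows that $K_v(u)$ equals the flag curvature of $\pi$ with flagpole $v$.

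The proof is thus mostly bookkeeping, and the one genuine computation is the vanishing of $H_{\gamma_v}$ on a geodesic, which is where Theorem~\ref{curvaturaway} carries the weight. The point I would watch most carefully --- the main potential obstacle --- is the final identification: the contraction $g_v(R_v(v,u)u,v)$ produced here must be matched, index for index, with the numerator used to define flag curvature in \cite{BaChSh00}. This matching uses the curvature symmetries of Proposition~\ref{propcurvature}, and it is essential that the Cartan-tensor correction terms $B^V$ occurring there drop out: in this contraction they are always paired against the flagpole $v$ and vanish by \eqref{porvenir}. Once the ordering of arguments and the sign conventions are aligned the identification is immediate, but a careless permutation of the arguments of $R^V$ could easily introduce a spurious sign.
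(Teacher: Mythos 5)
Your proposal is correct and follows essentially the same route as the paper's proof: apply Theorem~\ref{curvaturaway}, observe that $H_{\gamma_v}=0$ because $D_{\gamma_v}^{\dot\gamma_v}\dot\gamma_v=0$ along the geodesic, and then identify the resulting expression with the flag curvature as defined in \cite[Section 3.9]{BaChSh00}. Your additional remarks (Euler's identity giving $g_v(v,v)=L(v)$, and the care needed in matching index conventions for the contraction $g_v(R_v(v,u)u,v)$ with the numerator in \cite{BaChSh00}) simply make explicit what the paper compresses into ``it is straightforward.''
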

\begin{proof}
After Theorem \ref{curvaturaway}, it is straightforward that this quantity
is the flag curvature for the pseudo-Finsler metric $L$ of $\pi$ with flagpole $v$ (see \cite[Section 3.9]{BaChSh00}), since $H_{\gamma_v}=0$ because $\gamma_v$ is a geodesic, that is, $D_{\gamma_v}^{\dot\gamma_v}\dot\gamma_v=0$.
\end{proof}
Finally,
observe that,  with the notation of the corollary, the quantity 
\[K_v(u,w)=\frac{g_v(R^{\gamma_v}(v,u)w,v)}{L(v)g_v(u,w)-g_v(v,u)g_v(v,w)}\]
is the {\it predecessor of the flag curvature} (see \cite[page 69]{BaChSh00}).

\section*{Acknowledgments}

 I would like to warmly acknowledge A. R. Mart\'inez and  Professors M. Alexandrino, E. Caponio, 
and M. S\'anchez for a critical reading of some of the results contained in this manuscript.

\end{document}